\newtheorem*{theorem}{Theorem}
\newtheorem*{problem}{Problem}
\theoremstyle{nonumberplain}
\newtheorem{proof}{Proof.}
\newcommand{\df}{\smash{\lower.12em\hbox{\textup{\tiny def}}}}
\newcommand{\lsimeq}{\smash{\lower.24em\hbox{$\scriptstyle\simeq$}}}
\DeclareMathOperator{\Gal}{Gal}
\DeclareMathOperator{\Spec}{Spec}
\begin{document}

\title{On the integral Tate conjecture for abelian varieties}
\date{September 8, 2025}
\author{J.S. Milne}
\maketitle

\begin{abstract}
Recently Engel et al.\ (2025) have shown that the integral Hodge conjecture fails for
 very general abelian varieties. Using Deligne's theory of absolute Hodge
cycles, we deduce a similar statement for the integral Tate conjecture.

\end{abstract}

For a smooth projective variety $X$ over a subfield $k$ of $\mathbb{C}{}$, we
let $C^{r}(X)$ denote the $\mathbb{Z}$-module of algebraic cycles of
codimension $r$ on $X$ modulo (Betti) homological equivalence. When $k$ is
finitely generated over $\mathbb{Q}$, the integral Tate conjecture\footnote{Not
stated by Tate. At the time Tate stated his conjectures, Atiyah and Hirzebruch
had already found their counterexample to the integral Hodge conjecture.} says
that the cycle class map%
\[
C^{r}(X)\otimes\mathbb{Z}_{\ell}
\xrightarrow{c^{\prime}}H_{\mathrm{et}}^{2r}(\bar
{X},\mathbb{Z}{}_{\ell}(r))^{\Gal(\bar{k}/k)},
\]
is surjective. Here $\bar{k}$ is the algebraic closure of $k$ in $\mathbb{C}%
{}$ and $\bar{X}=X\times\Spec\bar{k}$.

\begin{theorem}
Let $A_{0}$ be an abelian variety over a subfield $k$ of $\mathbb{C}{}$
finitely generated over $\mathbb{Q}{}$. If the integral Hodge conjecture fails
for $A\overset{\df}{=}(A_{0})_{\mathbb{C}{}}$, then the integral Tate
conjecture fails for $A_{0}$ over some finite extension of $k$.
\end{theorem}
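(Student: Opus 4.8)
The plan is to extract a non-algebraic integral Hodge class on $A$, promote it to an integral Tate class on $A_{0}$ over a finite extension $k'/k$ by means of Deligne's theorem that every Hodge cycle on an abelian variety is absolutely Hodge, and then check that it still escapes the $\ell$-adic cycle class map over $k'$. I would begin with the coefficient bookkeeping that makes the integral statement behave well here. Since $H^{1}(A,\mathbb{Z})$ is free and $H^{\ast}(A,\mathbb{Z})=\bigwedge^{\ast}H^{1}(A,\mathbb{Z})$, each $H^{2r}(A,\mathbb{Z}(r))$ is torsion-free and each $H^{2r}_{\mathrm{et}}(\bar{A}_{0},\mathbb{Z}_{\ell}(r))$ is a free $\mathbb{Z}_{\ell}$-module; the comparison isomorphism identifies the latter with $H^{2r}(A,\mathbb{Z}(r))\otimes\mathbb{Z}_{\ell}$ compatibly with cycle classes, and torsion-freeness realizes it as a $\Gal(\bar{k}/k')$-stable submodule of $H^{2r}_{\mathrm{et}}(\bar{A}_{0},\mathbb{Q}_{\ell}(r))$ for every finite $k'/k$. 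I would also note that $C^{r}((A_{0})_{k'})$ sits inside $C^{r}(A)$ as subgroups of the (finitely generated) group $H^{2r}(A,\mathbb{Z}(r))$, since a cycle over $k'$ base-changes to one over $\mathbb{C}$ with the same Betti class, and that under the comparison isomorphism the map $c'$ for $A_{0}/k'$ is just the inclusion $C^{r}((A_{0})_{k'})\subseteq H^{2r}(A,\mathbb{Z}(r))$ tensored with $\mathbb{Z}_{\ell}$.

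Now assume the integral Hodge conjecture fails for $A$, so $C^{r}(A)$ is a proper subgroup of $\mathrm{Hdg}^{r}(A)=H^{2r}(A,\mathbb{Z}(r))\cap H^{r,r}$ for some $r$. I would fix $z\in\mathrm{Hdg}^{r}(A)$ whose image $\bar{z}$ in $\mathrm{Hdg}^{r}(A)/C^{r}(A)$ is nonzero, of order $n\in\{2,3,\dots\}\cup\{\infty\}$, and fix a prime $\ell$ dividing $n$ (any $\ell$ if $n=\infty$). By flatness of $\mathbb{Z}_{\ell}$ over $\mathbb{Z}$ the image of $\bar{z}$ in $(\mathrm{Hdg}^{r}(A)/C^{r}(A))\otimes\mathbb{Z}_{\ell}\cong(\mathrm{Hdg}^{r}(A)\otimes\mathbb{Z}_{\ell})/(C^{r}(A)\otimes\mathbb{Z}_{\ell})$ is still nonzero; equivalently, inside $H^{2r}_{\mathrm{et}}(\bar{A}_{0},\mathbb{Z}_{\ell}(r))$ the class $z\otimes 1$ does not lie in $C^{r}(A)\otimes\mathbb{Z}_{\ell}$, hence not in the image of $C^{r}((A_{0})_{k'})\otimes\mathbb{Z}_{\ell}$ under $c'$ for any finite extension $k'/k$.

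The crux is to arrange that $z\otimes 1$ is nonetheless a Galois-invariant class. Since $z$ is a rational Hodge class on the abelian variety $A$, Deligne's theorem says it is an absolute Hodge cycle, and a standard consequence is that, after replacing $k$ by a suitable finite extension $k'$, its $\ell$-adic realization $z\otimes 1\in H^{2r}_{\mathrm{et}}(\bar{A}_{0},\mathbb{Q}_{\ell}(r))$ is fixed by $\Gal(\bar{k}/k')$: for instance because the identity component of the Zariski closure of the image of $\Gal(\bar{k}/k)$ acting on $H^{1}_{\mathrm{et}}(\bar{A}_{0},\mathbb{Q}_{\ell})$ is contained in the Mumford--Tate group of $A$ over $\mathbb{Q}_{\ell}$, which fixes $z$, and that component cuts out an open subgroup of $\Gal(\bar{k}/k)$; alternatively, one invokes that $\Gal(\bar{k}/k)$ acts through a finite quotient on the finite-dimensional $\mathbb{Q}$-space of absolute Hodge cycles, all of whose realizations are then simultaneously defined over $k'$. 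Using the torsion-freeness recorded above, $z\otimes 1$ is then already $\Gal(\bar{k}/k')$-invariant in $H^{2r}_{\mathrm{et}}(\bar{A}_{0},\mathbb{Z}_{\ell}(r))$, so it is an integral Tate class for $A_{0}$ over $k'$ that is not in the image of $c'$. Hence $c'$ is not surjective for $A_{0}$ over $k'$, which is the assertion of the theorem.

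I expect the only genuine obstacle to be this last step: choosing the precise form of the consequence of Deligne's theory to quote, and confirming that the integral refinement goes through — which is exactly the place where torsion-freeness of the $\ell$-adic cohomology of an abelian variety is essential, since absolute Hodge cycles live in rational cohomology. Everything else is bookkeeping with the exactness of $-\otimes\mathbb{Z}_{\ell}$, the comparison isomorphism, and the compatibility of Betti and étale cycle classes.
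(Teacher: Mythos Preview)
Your proposal is correct and follows essentially the same route as the paper: pick a non-algebraic integral Hodge class, choose $\ell$ so that it remains non-algebraic after $\otimes\,\mathbb{Z}_{\ell}$, invoke Deligne's theorem (together with the finite-quotient action on absolute Hodge cycles, the paper's Prop.~2.9) to make the $\ell$-adic realization Galois-invariant over a finite extension, and use torsion-freeness plus the comparison isomorphism to descend to an integral Tate class outside the image of $c'$. Your extra details---the explicit choice of $\ell\mid n$, the inclusion $C^{r}((A_{0})_{k'})\subseteq C^{r}(A)$, and the Mumford--Tate/monodromy alternative for the Galois-fixing step---are all sound and just make the same argument more explicit.
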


\begin{proof}
We use that the cohomology of abelian varieties has no torsion to identify the
integral cohomology groups with subgroups of the rational groups.

Let $\gamma\in H^{2r}(A,\mathbb{Z}(r))\cap H^{0,0}$ be an integral Hodge class
on $A$ that is not algebraic, i.e., not in the image of the cycle class map%
\[
C^{r}(X)\xrightarrow{c}H^{2r}(A,\mathbb{Z}{}(r)).
\]
For some $\ell$, $\gamma\otimes1$ is not in the image of%
\begin{equation}
C^{r}(X)\otimes\mathbb{Z}{}_{\ell}\xrightarrow{c\otimes 1}H^{2r}%
(A,\mathbb{Z}{}(r))\otimes\mathbb{Z}{}_{\ell}.\tag{*}%
\end{equation}
According to the main theorem of Deligne 1982, the image of $\gamma$ in
$H_{\mathbb{A}{}}^{2r}(A)(r)$ is an absolute Hodge class on $A$. According
Proposition 2.9, ibid., $\gamma$ arises from an absolute Hodge class
$\gamma_{0}$ on $\bar{A}_{0}$ and $\mathrm{Gal}(\bar{k}/k)$ acts on
$\gamma_{0}$ through a finite quotient. After passing to a finite extension of
$k$, we may suppose that $\Gal(\bar{k}/k)$ fixes $\gamma_{0}$.  Then the
$\ell$-component $\gamma_{0}(\ell)$ of $\gamma_{0}$ is a Tate class in
$H^{2r}(\bar{A}_{0},\mathbb{Q}_{\ell}(r))$. Consider the commutative diagram%
\[
\begin{tikzcd}
C^{r}(A)\otimes\mathbb{Z}_{\ell}\arrow{r}{c\otimes 1}
&H^{2r}(A,\mathbb{Z}(r))\otimes
\mathbb{Q}_{\ell}\\
C^{r}(A_0)\otimes\mathbb{Z}_{\ell}\arrow{r}{c^{\prime}}\arrow{u}
&H_{\mathrm{et}}^{2r}(\bar{A}_0,\mathbb{Q}_{\ell}(r)),\arrow{u}[swap]{\simeq}
\end{tikzcd}
\]
where the isomorphism at right is the composite of the canonical isomorphisms%
\[
H_{\mathrm{et}}^{2r}(\bar{A}_{0},\mathbb{Q}{}_{\ell}(r))\simeq 
H_{\mathrm{et}}^{2r}(A,\mathbb{Q}{}_{\ell
}(r))\simeq H^{2r}(A,\mathbb{Z}{}(r))\otimes\mathbb{Q}{}_{\ell}.
\]
Under this isomorphism $\gamma_{0}(\ell)$ corresponds to $\gamma\otimes1$, so
$\gamma_{0}(\ell)$ lies in $H^{2r}(\bar{A}_{0},\mathbb{Z}{}_{\ell}(r))$ and is
an integral Tate class on $A_{0}$. If $\gamma_{0}(\ell)$ is in the image of
$c^{\prime}$, then $\gamma\otimes1$ is in the image of $c\otimes1$,
contradicting (*). We have shown that the integral Tate conjecture fails for
$A_{0}$.
\end{proof}

Engel et al.\ 2025 show that very general principally polarized abelian
varieties over $\mathbb{C}$ of dimension $\geq 4$ fail the integral Hodge conjecture. Thus, a
principally polarized abelian variety over a finitely generated 
subfield $k$ of
$\mathbb{C}$ fails the integral Tate conjecture over some  
finite extension of $k$ if, over $\mathbb{C}$, it becomes \textquotedblleft very
general\textquotedblright\ in their sense.

\begin{problem}
Exhibit an abelian variety over a number field that fails the integral Tate conjecture.
\end{problem}

\noindent\textbf{References}

\begingroup
\parindent 0pt \everypar{\hangindent1.5em\hangafter1}
\medskip Deligne, P. 1982. Hodge
cycles on abelian varieties (notes by J.S. Milne), pp. 9--100. In Hodge cycles,
motives, and Shimura varieties, Lecture Notes in Mathematics. Springer-Verlag,
Berlin. 

Engel, P., de Gaay Fortman, O., Schreieder, S., 2025. 
Matroids and the integral Hodge conjecture for abelian varieties,
arXiv:2507.15704v2.

\endgroup

\end{document}